\documentclass[a4paper]{amsart}
\usepackage[utf8]{inputenc}
\usepackage[T1]{fontenc}
\usepackage{natbib}
\usepackage{amsmath,amssymb}
\usepackage{hyperref}
\usepackage{graphicx}
\usepackage{todonotes}

\usepackage{amssymb}
\usepackage{dsfont}
\usepackage{enumerate}
\usepackage[inline]{enumitem}
\usepackage{mathtools}
\usepackage{amsmath}
\usepackage{color}
\usepackage{bbm}         
\usepackage{amsfonts}
\usepackage{graphicx,hyperref,verbatim}
\usepackage{amsthm}
\usepackage{cleveref}
\usepackage{url}
\usepackage{tikz,multicol,manfnt}
\usepackage{qtree}
\usepackage[weather,clock,alpine]{ifsym}
\usepackage{xparse}
\usepackage{parnotes}
\usepackage{placeins}

\renewcommand{\phi}{\varphi}

\NewDocumentCommand{\set}{mg}{\left\{#1\IfNoValueF{#2}{\;\middle\vert\;#2}\right\}}

\newcommand{\ZZ}{\mathds Z}
\renewcommand{\geq}{\geqslant}

\DeclareMathOperator{\Aut}{Aut}    
\DeclareMathOperator{\supp}{supp}
\DeclareMathOperator{\ac}{ac}
\DeclareMathOperator{\lead}{lead}
\DeclareMathOperator{\rad}{rad}

\theoremstyle{plain}
\newtheorem*{thm}{Theorem}
\newtheorem{cor}{Corollary}
\newtheorem{lem}[cor]{Lemma}

\theoremstyle{definition}

\theoremstyle{remark}

\title{A radical answer to a question by Robinson}
\author{Blaise Boissonneau}

\address{Heinrich Heine University D\"usseldorf,
Faculty of Mathematics and Natural Sciences,
Mathematical Institute,
Universit\"atsstr.\ 1, 40225 D\"usseldorf, Germany.}

\email{blaise.boissonneau@hhu.de}

\author{Mikel E. Garciarena}

\email{mikel.eguzki.garciarena.perez@hhu.de}

\author{Immanuel Halupczok}
\email{math@karimmi.de}
\date{May 2026}
\thanks{The third author was partially supported by the research training group \emph{GRK 2240: Algebro-Geometric Methods in Algebra, Arithmetic and Topology}, funded by the DFG}

\begin{document}

\begin{abstract}
 We study the ring of Puiseux polynomials with integer coefficients. We prove notably that the order given by the leading coefficient is definable without parameters in the language of rings. This answers a question of R.~Robinson.
\end{abstract}

\maketitle

\section{Introduction}
R.~M.~Robinson states in \cite{undecrings}:

\begin{quotation} Mostowski and Tarski have shown that any ordered ring with a unity element 1, and with no element between 0 and 1, is undecidable. Here we have an additional primitive concept $<$, so that the result does not concern the undecidability of rings as such. However, it follows that any ring in which such an ordering is arithmetically definable (in terms of addition and multiplication) is undecidable. There does not seem to be any simple example of such a ring, except for the ring of integers, though the existence of other such rings can be proved.
\end{quotation}

\begin{figure}
\includegraphics[]{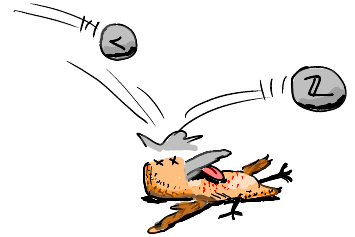}
\caption{
}\label{bird}
\end{figure}

To the best of our knowledge, no such ``simple example'' has previously been provided. We provide one here:

\begin{thm}
 Let $R=\bigcup_{n\in\mathds N_{>0}}\ZZ[t^\frac1n]$ be the ring of Puiseux polynomials with integer coefficients. The ordering given by $t>0$ and $t\gg1$ is discrete and $\emptyset$-$\mathcal L_{\rm{ring}}$-definable.
\end{thm}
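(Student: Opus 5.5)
The proof has three steps. First I check that the order is discrete. Then I define the monoid of positive monomials without parameters. Finally I use that monoid, together with sums of squares, to define positivity. Throughout, a nonzero $f\in R$ lies in some $\ZZ[x]$ with $x=t^{1/N}$, and $f>0$ means that the coefficient of the largest exponent of $f$ is positive.

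\emph{Discreteness.} This is elementary. Suppose $0<f<1$. If $f$ had positive degree, then $1-f$ would have negative leading coefficient, so $f>1$. Hence $f$ has degree $0$, i.e.\ $f=c\in\ZZ$, and $0<c<1$ is impossible.

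\emph{Defining the monomials.} Let $T=\set{t^q}{q\in\mathds Q_{>0}}$. I would show that $f\in T$ if and only if all of the following hold:
\begin{itemize}
\item $f$ is a nonzero nonunit (the units of $R$ are $\pm1$, since this holds in each $\ZZ[x]$);
\item $f$ is a square;
\item every divisor $d$ of $f$ satisfies $\exists g\,(g^2=d \vee g^2=-d)$.
\end{itemize}
For $f=t^q$, every divisor is $\pm t^{q'}$ with $0\le q'\le q$, so the conditions hold. The condition that $f$ is a square excludes $-t^q$, because a square has positive leading coefficient.

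For the converse, write $f=c\,x^a h(x)$ with $h(0)\neq0$. If $|c|>1$, a prime $p\mid c$ is a divisor that is not $\pm$ a square. If $h$ is nonconstant, let $h_0$ be a nonconstant squarefree factor of $h$ in $\ZZ[x]$; then $h_0$ divides $f$ in $R$. Any $g\in R$ with $g^2=\pm h_0$ lies in some $\ZZ[y]$ with $y^k=x$, so $h_0(y^k)=\pm g(y)^2$. But $h_0(y^k)$ is squarefree, because we are in characteristic $0$ and $h_0(0)\neq0$; this is a contradiction. In particular, $-1$ and $\pm t^q$ are the only elements with roots of all orders, which is the ``radical'' phenomenon behind the title.

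\emph{Defining positivity.} Let $P$ be the set of finite sums of squares; it is $\emptyset$-definable once I bound the number of squares. Every Puiseux polynomial with nonnegative integer coefficients lies in $P$, and with a bounded number of squares: each $c\,t^q=(t^{q/2})^2\cdot c$ is a sum of four squares by Lagrange, and the terms can be grouped. Conversely, every element of $P$ is positive. The remaining task is to reduce an arbitrary $f$ with positive leading coefficient to this situation, using multipliers from $T\cup\{1\}$ and from $P$.

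The tool I would try first is Pólya's theorem: if a real polynomial is strictly positive on $[0,\infty)$ with positive leading and constant terms, then multiplying by $(1+x)^M$ for large $M$ makes all coefficients positive. The difficulty is that $f>0$ only controls $f(x)$ for large $x$, not on all of $(0,\infty)$. I would handle this with a formula that permits a correction term divisible only in ``low degree'', for instance
\[
f>0 \iff \exists a,b,c\in P,\ a\neq0,\ \exists m\in T\ \bigl( a f = b - c \ \wedge\ \deg c<\deg b \bigr).
\]
Here the degree comparison $\deg c<\deg b$ has to be expressed through divisibility by elements of $T$ and the definable set $T$. This is the step I expect to be the main obstacle. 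Divisibility by $t^q$ measures the order at $0$, not the degree, so one must either use the involution-free trick of comparing $b$ and $c$ against a monomial $m$ (say, $m\cdot(\text{stuff})$ dominating $c$), or find a genuinely different SOS/Pólya-type certificate. Either way, the certificate must distinguish positivity ``at infinity'' purely by ring operations over $\ZZ$.
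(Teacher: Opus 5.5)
\textbf{Verdict: genuine gap.} Your discreteness argument is fine. Your definition of $T$ is the paper's formula (a square all of whose divisors are $\pm$ squares), with a nonunit clause added. Your converse uses squarefreeness of $h_0(y^k)$ instead of the paper's angular-component argument on $n$th roots, and it is valid. The gap is the third step, which is the real content of the theorem: you never write down a formula for positivity.

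Your certificate ($af=b-c$ with $a,b,c\in P$, $a\neq0$, $\deg c<\deg b$) is correct but empty. Take $a=1$ and let $b$, $c$ be the positive and negative parts of $f$: the certificate is then just a rephrasing of $\lead(f)>0$. Neither $a$, nor $m$, nor P\'olya's theorem does any work. P\'olya is the wrong tool anyway, since $\lead(f)>0$ says nothing about $f$ on $(0,\infty)$. So everything rests on expressing $\deg c<\deg b$ in the ring language, and you leave that open.

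Your claim that $P$ is definable via a bound on the number of squares is also unsupported. Lagrange gives four squares per monomial, the number of monomials is unbounded, and grouping squares creates cross terms. That is secondary, though: once degrees can be compared, $f>0$ iff $f=s_1^2+\dots+s_4^2+r$ with $\deg r<\deg f$.

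The missing idea, which is the paper's route, has two ingredients.

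\emph{Define $\ZZ$ in $R$.}
\begin{enumerate}
\item The signed powers of $2$ are the nonzero elements all of whose divisors other than $\pm1$ are divisible by $2$.
\item The positive ones ($2^j$, $j\geq0$) are those of the form $z^2$ or $2z^2$.
\item The set of $x$ with $xk+1=2^j$ for some $k\neq0$, $j\geq0$ contains $0$ and all odd integers (Euler's theorem). It contains no element of positive degree, since then $\deg(xk+1)>0$.
\item Sums of two such elements give exactly $\ZZ$, on which the order is definable by Lagrange's four-square theorem.
\end{enumerate}

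\emph{Turn divisibility into degree information, using divisors with unit constant term.}
\begin{enumerate}
\item If $n\in\ZZ$ satisfies $|n|>|\lead(x)|$, then $\deg(x)\leq m$ iff $x=(nt^\alpha-1)g+ct^m$ for some $\alpha\in\mathds Q_{>0}$, $g\in R$, $c\in\ZZ$.
\item For the forward direction, rescale so that $x\in\ZZ[t]$ and $m\in\mathds N$. Since $-1$ is a unit, you can divide $x$ by $nt-1$ ``from the bottom'' and push the remainder into a single monomial $ct^m$, for any $m\geq\deg(x)$.
\item Conversely, if $m<\deg(x)$, the top term forces $\lead(x)=n\lead(g)$, contradicting $|n|>|\lead(x)|$.
\item Quantifying over all sufficiently large $n\in\ZZ$, and taking the divisibility-minimal such $t^m$, defines $x\mapsto t^{\deg(x)}$.
\item Then $\lead(x)$ is the unique $a\in\ZZ$ with $\deg(x-at^{\deg(x)})\neq\deg(x)$, and $x>0$ iff $\lead(x)>0$ in $\ZZ$.
\end{enumerate}
This is exactly the device that gets around your correct observation that divisibility by $t^q$ only sees the order at $0$.
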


As a corollary, as noted by Robinson, this ring is undecidable by \cite[Theorem 14]{tarski53}; however, as we will see, a step of the proof requires us to define $\ZZ$ in $R$, so this corollary is killing one bird with two stones, see Figure~\ref{bird}.

Regarding the comment by Robinson that ``the existence of other such rings can be proved'', it is unclear whether this simply refers to nonstandard models of $\ZZ$, or whether Robinson refers to some abstract proof of existence of rings with definable discrete ordering which are not elementarily equivalent to $\mathds Z$. We provide an abstract proof here: let $\phi(x,y)$ be a fixed formula
defining the ordering in $\mathds Z$ (say, by the sum of four squares), and consider the theory of rings such that $\phi$ defines a discrete ordering. This theory is finitely axiomatisable and has $\mathds Z$ as a model, hence, by Gödel's incompleteness theorem, it cannot be complete.

Regarding the ring $R$ of Puiseux polynomials, we found it of interest by itself, and hence decided to list some of its properties, whether or not they help us define the ordering. Note that $R$ is not elementarily equivalent to $\ZZ$, since there exist elements which cannot be written as plus or minus a sum of four squares, for example, $t-1$.

\subsection{Notations and conventions}
Unless stated otherwise, ``definable'' will mean ``definable in the language of rings without parameters''.

Let $x=\sum_{\alpha\in S}c_{\alpha}t^{\alpha}\in R$.
We define:
\begin{enumerate}
\item $\supp(x)=\set{\alpha\in\mathds Q_{\geqslant 0}}{c_\alpha\neq 0}$, the support of $x$;
\item $v(x)=\min(\supp(x))$, the valuation of $x$, with the additional convention $v(0)=+\infty$;
\item $\ac(x)=c_{v(x)}$, the angular component of $x$, with the additional convention $\ac(0)=0$;
\item $\deg(x)=\max(\supp(x))$, the degree of $x$, and $\lead(x)=c_{\deg(x)}$, the leading coefficient of $x$, with the additional conventions $\deg(0)=-\infty$ and $\lead(0)=0$, respectively.
\end{enumerate}

For any $x,y\in R$, we have $v(xy)=v(x)+v(y)$ and $\ac(xy)=\ac(x)\ac(y)$.

By construction, any $x\in R\setminus\set0$ can be factorized uniquely as $t^{v(x)}(\ac(x)+y)$, with $v(y)>0$.
\section{Defining powers of \texorpdfstring{$t$}{t}}

\begin{lem}
 For $x\in R\setminus\set0$, the following are equivalent:
 \begin{enumerate}
 \item $x$ is a power of $t$, that is, $\exists\alpha\in\mathds Q_{\geqslant 0}$ such that $x=t^\alpha$;
 \item $x$ admits a square root and an $n$th root for infinitely many $n\in\mathds N$;
 \item $x$ admits an $n$th root for all $n\in\mathds N$.
 \end{enumerate}
\end{lem}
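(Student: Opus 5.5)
The plan is to prove (1)$\Rightarrow$(3)$\Rightarrow$(2)$\Rightarrow$(1). The first two implications are immediate: $t^\alpha=(t^{\alpha/n})^n$ with $t^{\alpha/n}\in R$ since $\alpha/n\in\mathds Q_{\geqslant0}$, and (3) trivially gives (2).

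For (2)$\Rightarrow$(1), write $x=t^{v(x)}(\ac(x)+y)$ with $v(y)>0$, using the unique factorisation noted above. Since every element of $R$ lies in some $\ZZ[t^{1/N}]$, a polynomial ring in the variable $s=t^{1/N}$ over $\ZZ$, we can argue in such polynomial rings.

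\emph{Step 1: the degree.} If $x=z^n$, then $\deg(x)=n\deg(z)$ and $v(x)=nv(z)$. These give no contradiction, because exponents are rational, so the degree gives no obstruction. We therefore work with coefficients instead.

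\emph{Step 2: the angular component and leading coefficient.} We have $\ac(x)=\ac(z)^n$ and $\lead(x)=\lead(z)^n$ with integer values. An integer $c\neq0$ that is an $n$th power for infinitely many $n$ must be $\pm1$. If $c=-1$, it can only be an $n$th power for odd $n$, but $x$ also has a square root, so $c$ is a square. Hence $\ac(x)=\lead(x)=1$. This explains why the square root appears in (2).

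\emph{Step 3: the main obstacle}, showing $y=0$. Factor out $t^{v(x)}$: put $u=t^{-v(x)}x=1+y$. If $x=z^n$, then $z=t^{v(z)}w$ with $\ac(w)=1$, and $u=w^n$ because $R$ is a domain. Suppose $y\neq0$, and pick $N$ with $u\in\ZZ[s]$, $s=t^{1/N}$. Write $w\in\ZZ[t^{1/M}]$. Since $u$ is a polynomial in $s$ and $\ZZ[t^{1/M}]$ is free over $\ZZ[t^{1/N'}]$ for compatible $M,N'$, the first step is to show $w$ already lies in $\ZZ[s]$. Then $u=w^n$ in the polynomial ring $\ZZ[s]$ with $u$ nonconstant, so $\deg_s u=n\deg_s w\geqslant n$. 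This bounds $n\leqslant\deg_s u$, contradicting infinitely many $n$.

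To prove $w\in\ZZ[s]$, I would use a Galois/substitution argument. Let $\zeta$ be a primitive $M$th root of unity and put $s'=t^{1/M}$, where $N\mid M$ and $s=s'^{M/N}$. Then $u$ is invariant under $s'\mapsto\zeta^{M/N}$-twists; more precisely, it is invariant under $\sigma: s'\mapsto \zeta^{k}s'$ for all $k$ with $M/N\mid k$. Consequently $\sigma(w)^n=w^n$, so $\sigma(w)=\epsilon w$ for an $n$th root of unity $\epsilon$ in $\mathds Q(\zeta)(s')$. Comparing constant terms, which are $\ac=1$ and are fixed by $\sigma$, gives $\epsilon=1$. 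Hence $w$ is fixed by all such $\sigma$, and so only exponents in $\frac1N\ZZ$ occur, i.e.\ $w\in\ZZ[s]$.

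Alternatively, one can avoid roots of unity. Write $y=a t^{\beta}+(\text{higher})$ with $a\neq0$, $\beta=v(y)$, and $w=1+y'$. Expanding $w^n=1+ny'+\dots$ shows $v(y')=\beta$ and $n\cdot\ac(y')=a$. Then $n\mid a$ for infinitely many $n$, which forces $a=0$, a contradiction. This is simpler, and I would present it as the main argument: it needs only the binomial expansion of the lowest-order term, which is valid since $v(y')>0$.
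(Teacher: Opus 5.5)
Your final argument, expanding $w^n=1+ny'+\dots$ to get $\ac(y)=n\,\ac(y')$ and hence $n\mid\ac(y)$ for infinitely many $n$, is essentially the paper's proof: the paper fixes one $n>|\ac(y)|$ and derives $|\ac(y)|\geq n$. One small fix is needed: for even $n$ you only know $\ac(z)=\pm1$, so you should replace $z$ by $-z$ to get $\ac(w)=1$ (the paper instead carries a sign $\varepsilon$). The Galois detour is unnecessary, and as written its invariance condition is off, since $s'\mapsto\zeta^k s'$ fixes $s=s'^{M/N}$ iff $N\mid k$, not $M/N\mid k$.
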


\begin{proof}
$(1)\Rightarrow(3)\Rightarrow(2)$ is immediate. Now take $x\in R\setminus\set0$ admitting $n$th-roots for infinitely many $n\in\mathds N$. For any such $n$, let $y_n\in R$ be such that $y_n^n=x$. Now $\ac(y_n^n)=\ac(y_n)^n=\ac(x)$. So $\ac(x)$ has a square root and an $n$th root for infinitely many $n$, which means $\ac(x)=1$ and $\ac(y_n)=\pm1$.

Factorize $x$ into $t^{v(x)}(1+y)$, with $v(y)>0$. Let $n>|\ac(y)|$ such that $x$ admits an $n$th root. Let $y_n^n=x$, and write $y_n=t^{v(y_n)}(\varepsilon+z)$, where $\varepsilon=\ac(y_n)=\pm1$ and $v(z)>0$. Since $\varepsilon^n=\ac(x)=1$, we have $(\varepsilon+z)^n=1+n\varepsilon^{n-1}z+w$, where $z^2|w$. Hence:
$$x=y_n^n=t^{nv(y_n)}(\varepsilon+z)^n=t^{v(x)}(1+n\varepsilon^{n-1}z+w)=t^{v(x)}(1+y)$$

that is, $y=n\varepsilon^{n-1}z+w$. If $z\neq 0$, then $\ac(y)=n\varepsilon^{n-1}\ac(z)$. Thus $|\ac(y)|=n|\ac(z)|\geq n$, contradicting the choice of $n$. Therefore $z=0$, and consequently $y=0$. Hence $x=t^{v(x)}$.

\end{proof}

\begin{cor}
 The set of powers of $t$ is definable in $R$ by the following formula:
 $$\phi(x)\colon(\exists z\;x=z^2\wedge\forall y\; (y|x\rightarrow (\exists z\;y=\pm z^2)))$$
 Furthermore, the map $R\setminus\set{0}\rightarrow R$, $x\mapsto t^{v(x)}$ is definable.
 
 \noindent Finally, $\Aut(R)\cong\mathds Q_{>0}^\times$. More precisely, any automorphism of $R$ is of the form $t\mapsto t^\alpha$, for some $\alpha>0$.
\end{cor}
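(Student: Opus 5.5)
The plan is to verify the formula $\phi$ in both directions using the earlier Lemma, then define $x\mapsto t^{v(x)}$ as the largest power of $t$ dividing $x$, and finally read off the automorphisms from how they act on powers of $t$.

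\emph{Powers of $t$ satisfy $\phi$.} Let $x=t^\alpha$. Then $x=(t^{\alpha/2})^2$. For the divisor condition, suppose $yw=t^\alpha$. Then $v(y)+v(w)=\alpha=\deg(y)+\deg(w)$. Since $v\le\deg$ for each factor, this forces $v(y)=\deg(y)$, so $y$ is a monomial $c\,t^\beta$. Moreover $\ac(y)\ac(w)=1$ gives $c=\pm1$. Hence $y=\pm(t^{\beta/2})^2$.

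\emph{Elements satisfying $\phi$ are powers of $t$.} First, $x=0$ fails $\phi$: every $y$ divides $0$, but $2$ is not $\pm$ a square in $R$, since its angular component would have to be $\pm$ a square in $\ZZ$. For $x\neq0$ satisfying $\phi$, I show by induction that $x$ has a $2^k$th root for every $k$, with each root dividing $x$.
\begin{itemize}
\item Write $x=z_1^2$; then $z_1\mid x$.
\item Given $x=z_k^{2^k}$ with $z_k\mid x$, the divisor condition gives $z_k=\pm w^2$.
\item Then $x=w^{2^{k+1}}$ (the sign disappears since $2^k$ is even), and $w\mid z_k\mid x$.
\end{itemize}
So $x$ has a square root and an $n$th root for infinitely many $n$. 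Condition (2) of the Lemma then shows that $x$ is a power of $t$.

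\emph{The map $x\mapsto t^{v(x)}$.} For $x\neq0$, I claim $t^\beta\mid x$ if and only if $\beta\le v(x)$. If $\beta\le v(x)$, then $x/t^\beta$ has support in $\mathds Q_{\geq0}$, so it lies in $R$. Conversely, $x=t^\beta u$ gives $v(x)=\beta+v(u)\ge\beta$. Hence $t^{v(x)}$ is defined by the formula
\[
\phi(y)\wedge y\mid x\wedge\forall y'\,\bigl(\phi(y')\wedge y'\mid x\rightarrow y'\mid y\bigr),
\]
using $t^\beta\mid t^\gamma\iff\beta\le\gamma$.

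\emph{Automorphisms.} Let $\sigma\in\Aut(R)$. Since $\phi$ is $\emptyset$-definable, $\sigma$ permutes the powers of $t$, so $\sigma(t)=t^\alpha$ for some $\alpha$. We have $\alpha\neq0$, because $t$ is not a unit (the units of $R$ are $\pm1$) and so $\sigma(t)\neq1$. Next, $\sigma(t^{1/n})$ is a power of $t$ whose $n$th power is $t^\alpha$. Exponents of powers of $t$ are unique, so $\sigma(t^{1/n})=t^{\alpha/n}$. As $\sigma$ fixes $\ZZ$ and $R$ is generated over $\ZZ$ by the $t^{1/n}$, $\sigma$ is exactly the substitution $t\mapsto t^\alpha$.

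Conversely, for any $\alpha\in\mathds Q_{>0}$, the substitution $t^\beta\mapsto t^{\alpha\beta}$ is a ring endomorphism of $R$. Its inverse is the substitution for $1/\alpha$, so it is an automorphism. Composing the substitutions for $\alpha$ and $\alpha'$ gives the substitution for $\alpha\alpha'$. Hence $\alpha\mapsto(t\mapsto t^\alpha)$ is an isomorphism $\mathds Q_{>0}^\times\cong\Aut(R)$.

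The only delicate point is the converse direction of $\phi$. One must use that each successive root still divides $x$, so that the divisor clause can be reapplied, and check that the sign ambiguity vanishes in even powers; the Lemma then finishes the argument.
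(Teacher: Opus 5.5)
Your proof is correct and follows the paper's route: $2^k$th roots plus Lemma~1 to show elements satisfying $\phi$ are powers of $t$, the divisibility-maximal power of $t$ dividing $x$ to get $t^{v(x)}$, and the action on powers of $t$ to determine $\Aut(R)$. Your local variations are sound: you show divisors of $t^\alpha$ are $\pm t^\beta$ by additivity of $v$ and $\deg$ rather than the paper's appeal to $\ZZ[t^{1/n}]$, and you explicitly rule out $x=0$, which the paper leaves implicit even though Lemma~1 assumes $x\neq0$.
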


\begin{proof}
 If $x\in R$ satisfies $\phi$, then $x$ is a square, and any divisor of $x$ is itself, up to sign, a square. In particular, $x$ has a $2^n$th root for every $n$, thus by the previous lemma $x$ must be a power of $t$.

 Conversely, if $x=t^\alpha$, then $x$ is a square since $t^{\frac\alpha2}\in R$. We claim that every divisor of $x$ is of the form $\pm t^\beta$ for $\beta\leqslant\alpha$. 
 Indeed, if $x=yz$ for some $y,z\in R$, then there is $n\in\mathds N$ such that $x,y,z\in\ZZ[t^{\frac1n}]$, in which the only divisors of $t^{\alpha}$ are of the form $\pm t^{\beta}$. Therefore, all the divisors of $t^\alpha$ are themselves plus or minus a square, and $x$ satisfies $\phi$.

 To define the (graph of the) map $x\mapsto t^{v(x)}$, we can use the formula $$\psi(x,y)\colon x\neq0\wedge\phi(y)\wedge y|x\wedge(\forall z\;((\phi(z)\wedge z|x)\rightarrow z|y))$$ 

 Finally, it is clear that for each $\alpha\in\mathds Q_{> 0}$ there is a unique automorphism of $R$ sending $t^\beta$ to $t^{\alpha\beta}$ for every $\beta \in  \mathds Q_{\geqslant 0}$. Conversely, if $\sigma \in \operatorname{Aut}(R)$, then
 for every $\beta \in \mathds{Q}_{>0}$, 
  $\sigma(t^\beta)$ has an
$n$th root for every $n$, hence by Lemma~1 we have
$\sigma(t^\beta)=t^{\rho(\beta)}$ for some $\rho(\beta) \in \mathds{Q}_{\geq 0}$.
That $\sigma$ respects multiplication on the set of powers of $t$ implies that $\rho(\beta) = \alpha\cdot \beta$ for some $\alpha \in \mathds{Q}_{\geq 0}$.
Moreover, $\alpha \neq 0$, since automorphisms preserve units, and $t$
is not a unit whereas $t^0=1$ is. Thus $\alpha \in \mathds{Q}_{>0}$.
Thus $\sigma$ is necessarily the automorphism sending $t$ to $t^\alpha$.
\end{proof}

\section{Defining \texorpdfstring{$\ZZ$}{ℤ}}

\begin{lem} In $R$, the set of polynomials of degree $\leqslant 0$, that is, $\ZZ$, is definable.
\end{lem}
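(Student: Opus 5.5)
The plan is to find a first-order characterisation of the constants that only uses the definable set of powers of $t$ from Corollary~2, together with divisibility. The guiding observation is an \emph{evaluation at $1$} phenomenon. For every power $p=t^\alpha\neq 1$, reducing modulo $p-1$ forces $t^\alpha\equiv 1$. Moreover, for every $f\in R$, the difference $f-f(1)$ is divisible by $t^{1/m}-1$ as soon as $f\in\ZZ[t^{1/m}]$. This is because $t^{1/m}-1$ divides $t^{k/m}-1$ for every $k$.

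On the other hand, a nonzero element divisible by $t^\alpha-1$ must have degree at least $\alpha$. This is checked inside a single ring $\ZZ[t^{1/n}]$ containing everything, using the leading coefficient and the degree.

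So I would aim for a formula of the shape
$$\chi(x)\colon\ \forall p\ \bigl((\phi(p)\wedge p\neq 1)\rightarrow \theta(x,p)\bigr),$$
where $\theta(x,p)$ expresses that $x$ is, in a divisibility sense, ``small compared to $p$''. Because $\chi$ quantifies over \emph{all} powers of $t$, its content should be: the degree of $x$ is below every positive rational, hence $\le 0$.

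The concrete candidate for $\theta(x,p)$ is: ``$x$ is congruent modulo $p-1$ to no element of strictly smaller size other than itself''. To make this definable I would test against powers $q$ of $t$ (definable by $\phi$) dividing $p$. The point is that $x$ can be reduced modulo $p-1$ precisely when some power $q$ with $q\cdot p\mid$ (the relevant power of $t$ attached to $x$) occurs.

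To get hold of the degree, I would use the definable map $x\mapsto t^{v(x)}$ from Corollary~2 together with a reversal trick. Suppose $x\in\ZZ[t^{1/n}]$ and $q=t^\beta$ is a power of $t$ with $\beta\ge\deg x$. Then $q\cdot x(t^{-1})$ is an honest element of $R$. I would try to express the relation ``$y$ is the reversal of $x$ with respect to $q$'' through congruences modulo $p-1$ for varying $p$. If this works, the degree becomes accessible as $t^{\deg x}=q/t^{v(\text{reversal})}$. Then $x\in\ZZ$ iff this power equals $1$.

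Both directions would be checked by working inside a large enough $\ZZ[t^{1/n}]$. For the forward direction, constants are fixed by reduction modulo $p-1$ and reverse to themselves. For the converse, a nonconstant $x$ gives a witness $p=t^\alpha$ with $0<\alpha<\deg x$ at which $x$ is strictly reducible.

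The main obstacle is making ``strictly smaller'' (the degree comparison) first-order without already having the ordering or $\ZZ$. The first thing I would try is the bound that a nonzero multiple of $t^\alpha-1$ has degree at least $\alpha$. This turns ``$\deg y<\alpha$'' into ``$y$ is the unique representative of its class modulo $t^\alpha-1$ that is divisible by no power-gap''.

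If that fails, I would fall back on a softer invariant. The idea there is to characterise constants $c$ as the elements with $c\equiv x$ modulo $p-1$ for \emph{all} $p$ simultaneously, where $x$ ranges over elements of the same residue. This is the statement that $c$ is its own ``value at $1$''. I would then exclude nonconstant $f$ by showing that $f-f(1)\neq 0$ is not divisible by $t^\alpha-1$ for large $\alpha$.
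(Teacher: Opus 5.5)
Your proposal has a genuine gap: it never writes down $\theta(x,p)$, and each ingredient meant to supply it either presupposes a degree comparison or is left undefined. The only size information you actually have from Corollary~2 is $x\mapsto t^{v(x)}$. That map sees only the lowest exponent and is blind to the top of $x$ (e.g.\ $t^{v(1+t)}=t^{v(1)}=1$). Turning it into $t^{\deg x}$ needs the relation ``$y=t^\beta x(t^{-1})$'', and for this you only say you would try to express it by congruences modulo $p-1$. Reversal is not a ring operation, and you give no congruence conditions that pin it down. Likewise, ``the unique representative of its class modulo $t^\alpha-1$'' is unique only among elements of degree $<\alpha$, so it uses the very comparison it is supposed to define. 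And ``divisible by no power-gap'' is not a defined notion. In the paper the dependency runs the other way: the degree map (Lemma~4, Corollary~5) is obtained only after $\ZZ$, using integers $n$ with $|n|>|\lead(x)|$ and $c\in\ZZ$. Reversing this order would need a genuinely new mechanism, and your proposal does not provide one.

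The fallback does not close the gap. Read literally, ``$c\equiv x$ modulo $p-1$ for all $p$'' forces $c=x$: a nonzero multiple of $t^\alpha-1$ has degree at least $\alpha$, and $\alpha$ is unbounded. So the condition characterises nothing. Read as ``$c$ is its own value at $1$'', it requires naming $f(1)$, i.e.\ quantifying over $\ZZ$ or having the evaluation map, which is circular. The correct fact that $t^\alpha-1\nmid f-f(1)$ for large $\alpha$ therefore never becomes a formula.

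The paper sidesteps all of this by working with the arithmetic of $2$ rather than with sizes:
\begin{enumerate}
\item The nonzero $x$ all of whose divisors other than $\pm1$ are divisible by $2$ are exactly $\pm2^n$. This works because divisors of constants are constants, as degrees add in $\ZZ[t^{1/N}]$.
\item The positive ones among them are the squares and twice squares.
\item By Euler's theorem, $\exists k\neq0\;(xk+1\in 2^{\mathds N})$ holds for odd integers and $0$. It fails whenever $\deg x>0$, because then $\deg(xk+1)>0$.
\item $\ZZ$ is then the set of sums of two such elements.
\end{enumerate}
Some idea of this kind, which pins down the constants through the arithmetic of a specific integer, is what your plan is missing.
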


\begin{proof}
We first define the set of signed powers of 2 with the formula
$$\phi(x)\colon x\neq0\wedge(\forall y\;((y|x\wedge y\neq\pm1)\rightarrow 2|y))$$
We then define the positive powers of 2 by
$$\phi_+(x)\colon\phi(x)\wedge\exists z\; (x=z^2\vee x=2z^2)$$
Finally, recall that any odd integer $x$ satisfies $\exists k\exists n\;(k\neq0\wedge n>0\wedge xk+1=2^n)$, by Euler-Fermat's theorem \cite[Theorema 11]{eulertotient}. Therefore, in $R$, the formula
$$\psi(x)\colon\exists k\;(k\neq0\wedge\phi_+(xk+1))$$
is satisfied by the odd integers, by 0, and by no element of degree $>0$: if $\deg(x)>0$, then for any $k\neq0$ we have $\deg(xk)=\deg(x)+\deg(k)>\deg(k)\geqslant0$, and $\deg(xk+1)=\deg(xk)>0$, so $xk+1$ cannot be a power of 2.

Thus, every element satisfying $\psi$ belongs to $\ZZ$. Since every integer is a sum of two elements which are either odd integers or 0, $\ZZ$ can be defined in $R$ by
$\exists y\exists z\;(\psi(y)\wedge\psi(z)\wedge x=y+z)$.
\end{proof}

\section{Defining the degree}

\begin{lem}
 Let $x\in R$, $n\in\mathds Z$ and $m\in\mathds Q_{\geqslant 0}$. Assume $|n|>|\lead(x)|$. Then $m\geqslant\deg(x)$ iff $\exists\alpha\in \mathds Q_{>0}$, $\exists g\in R$ and $\exists c\in\mathds Z$ such that $x=(nt^\alpha-1)g+ct^m$.
\end{lem}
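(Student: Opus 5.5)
The plan is to treat the two implications separately. The forward direction is a reduction argument modulo the element $nt^\alpha-1$. The converse is a leading-coefficient comparison, and that is where the hypothesis $|n|>|\lead(x)|$ is used.

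\emph{($\Rightarrow$)} Assume $m\geqslant\deg(x)$ and write $x=\sum_{\beta\in S}c_\beta t^\beta$. I choose $N\in\mathds N_{>0}$ such that $m$ and every $\beta\in S$ lie in $\frac1N\ZZ$, and I set $\alpha=\frac1N$. The key identity is
$$n t^{\gamma+\alpha}-t^\gamma=t^\gamma(nt^\alpha-1),$$
so $t^\gamma\equiv n t^{\gamma+\alpha}$ modulo the ideal generated by $nt^\alpha-1$. Iterating this $k$ times gives $t^\gamma\equiv n^k t^{\gamma+k\alpha}$. For each $\beta\in S$ we have $\beta\leqslant m$, so $k_\beta=N(m-\beta)$ is a non-negative integer. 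Hence $c_\beta t^\beta\equiv c_\beta n^{k_\beta}t^m$. Summing over $\beta\in S$ gives
$$x\equiv c\,t^m,\qquad c=\sum_{\beta\in S} c_\beta n^{N(m-\beta)}\in\ZZ.$$
This produces the required $g\in R$. The hypothesis on $n$ is not needed in this direction.

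\emph{($\Leftarrow$)} Suppose $x=(nt^\alpha-1)g+ct^m$ with $\alpha>0$, and assume towards a contradiction that $m<\deg(x)$.
\begin{enumerate}
\item If $g=0$, then $x=ct^m$, so $\deg(x)\leqslant m$, a contradiction. Hence $g\neq0$.
\item Since $\alpha>0$, the product $(nt^\alpha-1)g$ has degree $\alpha+\deg(g)$ and leading coefficient $n\lead(g)$.
\item If $\alpha+\deg(g)\leqslant m$, then both summands have degree at most $m$, so $\deg(x)\leqslant m$, again a contradiction.
\item Otherwise $\alpha+\deg(g)>m$, so the term $ct^m$ cannot cancel the top monomial. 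Then $\lead(x)=n\lead(g)$, and since $\lead(g)\neq0$ is an integer, $|\lead(x)|\geqslant|n|$. This contradicts $|n|>|\lead(x)|$.
\end{enumerate}
Hence $m\geqslant\deg(x)$.

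I expect no real obstacle. The only delicate point is the bookkeeping in the converse when $\alpha+\deg(g)=m$, where cancellation with $ct^m$ could occur. That case is absorbed into item 3, since there $\deg(x)\leqslant m$ regardless of cancellation.
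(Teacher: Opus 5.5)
Your proof is correct and is essentially the paper's. The converse is the same leading-coefficient comparison, with $n\neq0$ guaranteed by $|n|>|\lead(x)|$. Your monomial-by-monomial reduction modulo $nt^\alpha-1$ is exactly the back-substitution that solves the paper's bidiagonal integer system, producing the same $c=\sum_\beta c_\beta n^{N(m-\beta)}$, but done directly with $\alpha=\frac1N$ instead of first applying $t\mapsto t^r$ (which also makes explicit that the witness in $R$ is $\alpha=\frac1N$, not the $\alpha=1$ the paper writes after its reduction).
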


\begin{proof}
 Suppose $x=(nt^\alpha-1)g+ct^m$. If $g=0$, then $x=ct^m$ and $m=\deg(x)$. Assume now $g\neq0$. We have $\lead((nt^\alpha-1)g)=n\lead(g)$. If $m<\deg(g)+\alpha$, then $\lead(x)=\lead((nt^\alpha-1)g+ct^m)=n\lead(g)$ which contradicts the choice of $n$. Thus $m\geqslant\deg(g)+\alpha$ and $\deg(x)=\deg((nt^\alpha-1)g+ct^m)\leqslant\max(\deg(g)+\alpha,m)=m$.

 Suppose now that $m\geqslant\deg(x)$.
 There is $r\in\mathds N$ such that $x\in\mathds Z[t^{\frac1r}]$ and such that $rm \in \mathds N$.
 By applying the automorphism $\sigma\colon t\mapsto t^{r}$, we may assume that
$x \in \mathds Z[t]$ and $m \in \mathds N$.
 
 Let $d=\deg(x)$ and write $x=\sum_{0\leqslant i\leqslant d}a_it^i$. We can now find $g\in\mathds Z[t]$ such that $x=(nt-1)g(t)+ct^{m}$. Indeed, the system
 $$\begin{pmatrix}
 -1&0&\dots&\dots&\dots&0\\
 n&-1&\ddots&&&\vdots\\
 0&\ddots&\ddots&\ddots&&\vdots\\
 \vdots&\ddots&\ddots&\ddots&\ddots&\vdots\\
 \vdots&&\ddots&n&-1&0\\
 0&\dots&\dots&0&n&1
 \end{pmatrix}
 \times
 \begin{pmatrix}
 b_0\\\vdots\\\vdots\\\vdots\\b_{m-1}\\c
 \end{pmatrix}
 =
 \begin{pmatrix}
 a_0\\\vdots\\a_d\\0\\\vdots\\0
 \end{pmatrix}
 $$
 admits a solution (in $\mathds Z$), and we can take $g(t)=\sum_{0\leqslant i<m}b_it^i$.

 Now $x=(nt-1)g(t)+ct^{m}$, thus $x=(nt^{\alpha}-1)g+ct^m$ with $\alpha=1$.
\end{proof}

Recall that in $\mathds Z$, the ordering is definable, for example by using Lagrange's four-square theorem 
\cite{4squares}.

\begin{cor}
 The degree map $R\setminus\set0\rightarrow R$, $x\mapsto t^{\deg(x)}$ and the leading coefficient map $R\rightarrow\mathds Z$, $x\mapsto\lead(x)$ are definable.
\end{cor}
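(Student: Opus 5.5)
We will use Lemma~4 to characterize the degree by a first-order condition in which $t^m$ appears as a parameter ranging over the powers of $t$. These are definable by Corollary~2, and $\ZZ$ is definable by Lemma~3. The remaining difficulty is the side condition $|n|>|\lead(x)|$, since $\lead(x)$ is exactly what we are trying to define.

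\textbf{Degree.} Let $P(y)$ be the formula from Corollary~2 defining the powers of $t$, and $Z(y)$ the formula from Lemma~3 defining $\ZZ$. For $x\neq0$ consider
$$D(x,u)\colon\ \exists n\,\bigl(Z(n)\wedge\forall w\,(P(w)\rightarrow (w\,|\,u \leftrightarrow \theta(x,n,w)))\bigr),$$
where
$$\theta(x,n,w)\colon\ \exists a\,\exists g\,\exists c\,\bigl(P(a)\wedge a\neq1\wedge Z(c)\wedge x=(na-1)g+cw\bigr).$$
The intent is that for a suitable $n$, the powers $t^m$ satisfying $\theta$ are exactly those with $m\geq\deg(x)$, and $u$ is the least of them, namely $t^{\deg(x)}$.

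The one subtlety is that $n$ must satisfy $|n|>|\lead(x)|$, which we cannot yet express. We avoid this by quantifying over $n$ and exploiting monotonicity. The direction ``if $m\geq\deg(x)$ then $\theta$ holds'' holds for every nonzero $n$: the constructive half of the proof of Lemma~4 never uses the size of $n$, since the triangular system is solvable over $\ZZ$ for any $n$. So for every $n\neq0$, the set $S_n=\{m : \theta(x,n,t^m)\}$ contains $[\deg(x),\infty)\cap\mathds Q$. For large $|n|$, Lemma~4 gives equality. Hence $\deg(x)=\max_{n\neq 0}\min S_n$, or more concretely $t^{\deg(x)}$ is the unique power $u$ of $t$ such that
\begin{itemize}
\item[(i)] some nonzero integer $n$ makes $\{w:\theta(x,n,w)\}$ equal to the set of powers of $t$ divisible by $u$, and
\item[(ii)] no power $w$ strictly dividing $u$ satisfies $\theta(x,n',w)$ for all nonzero integers $n'$.
\end{itemize}
It is simpler to take
$$u=t^{\deg(x)}\iff P(u)\wedge\forall w\,\bigl(P(w)\rightarrow(u\,|\,w\leftrightarrow\forall n\,(Z(n)\wedge n\neq0\rightarrow\theta(x,n,w)))\bigr).$$
This is correct because $\bigcap_{n\neq0}S_n=[\deg(x),\infty)$: the intersection contains this set, and is contained in it via any $n$ with $|n|>|\lead(x)|$. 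Divisibility among powers of $t$ matches the order on exponents, so $u\,|\,w$ means $\deg\leq m$.

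\textbf{Leading coefficient.} Given $u=t^{\deg(x)}$, set $\lead(0)=0$, and for $x\neq0$ define $\lead(x)$ as the unique integer $c$ such that $\deg(x-cu)<\deg(x)$ or $x=cu$. This is expressible using the degree map just defined: either $x-cu=0$, or $t^{\deg(x-cu)}$ is a proper divisor of $u$. Uniqueness is clear because the coefficient of $t^{\deg(x)}$ must vanish.

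\textbf{Main obstacle.} The only delicate point is the first one: removing the dependence on the unknown bound $|n|>|\lead(x)|$. This is handled by universally quantifying over nonzero $n$ after checking that the ``easy'' direction of Lemma~4 holds for every $n$. It is worth re-verifying that the reduction via the automorphism $t\mapsto t^r$ and the triangular system indeed work for arbitrary $n\neq0$. The subdiagonal entries equal to $n$ play no role in solvability, since the diagonal entries are $\pm1$.
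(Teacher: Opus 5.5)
\textbf{Verdict.} Your final formula is correct, and it takes a different route from the paper at the key quantifier step.

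\textbf{Comparison with the paper.} The paper uses Lemma~4 as a black box, so it must guarantee $|n|>|\lead(x)|$ in both directions. It does this with the quantifier pattern $\exists N\in\ZZ\;\forall n>N$, which is why the definability of the order on $\ZZ$ (via four squares) is recalled just before the corollary. You instead reopen the proof of Lemma~4 and observe that its existence half needs no hypothesis on $n$. The system is lower triangular with diagonal entries $\pm1$, hence solvable over $\ZZ$ for every $n$, and the reduction via $t\mapsto t^r$ yields $\alpha=1/r>0$. So every $S_n$ contains $[\deg(x),\infty)\cap\mathds Q$, with equality once $|n|>|\lead(x)|$. Hence $\bigcap_{n\neq0}S_n$ is exactly the set you need, and a single universal quantifier over $n$ suffices.

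This gives a simpler formula that does not use the ordering of $\ZZ$ at all. Your condition $a\neq1$ also matches the hypothesis $\alpha>0$ of Lemma~4 exactly. The cost is that you must prove a slight strengthening of one direction of Lemma~4 rather than quote it. The leading-coefficient step is the same as the paper's. Your explicit handling of the case $x=cu$ is the right way to avoid applying the degree map to $0$.

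\textbf{Delete the preliminary characterisations.} Both are false as stated. For every $x$ one has $S_1=\mathds Q_{\geqslant0}$: take $c$ to be the sum of the coefficients of $x$; then $x-ct^m$ vanishes at $t=1$ and is therefore divisible by $t^{1/q}-1$ for suitable $q$.
\begin{itemize}
\item Consequently $u=1$ satisfies (i) with $n=1$ and (ii) vacuously. So (i)--(ii) do not pin down $t^{\deg(x)}$ when $\deg(x)>0$. In fact both conditions only give $u=t^k$ with $k\leqslant\deg(x)$. What is missing is that $u$ itself satisfies $\theta(x,n',u)$ for every $n'\neq0$.
\item Likewise, $D(x,u)$ with its existential quantifier over $n$ would accept $u=1$ once the divisibility is corrected to $u\,|\,w$. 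As written, with $w\,|\,u$, it is never satisfied, since each $S_n$ is unbounded.
\end{itemize}
Only your last displayed formula, justified by $\bigcap_{n\neq0}S_n=[\deg(x),\infty)$, should remain.
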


\begin{proof}
 For the degree map, consider first $\phi(x,y,z,n,c)\colon \exists g\;x=(nz-1)g+cy$. Now the formula $$\psi(x,y)\colon y\in t^{\mathds Q_{\geqslant 0}}\wedge(\exists N\in\mathds Z\;\forall n>N\;\exists z\in t^{\mathds Q_{\geqslant 0}}\,\exists c\in\mathds Z\;\phi(x,y,z,n,c))$$
 where ``$>$'' is the ordering on $\mathds Z$, defines the set of $y=t^m$ with $m\geqslant
 \deg(x)$. We can now take the minimal such -- with respect to the divisibility relation.

 For the leading coefficient map, we have $a=\lead(x)$ iff $x=0\wedge a=0$ or $a\in\mathds Z\wedge\deg(x-at^{\deg(x)})\neq\deg(x)$, which is a definable condition since the degree map is definable.
\end{proof}

\section{Defining the ordering}

We are now ready to prove the theorem. Consider the ordering on $R$ with $t>0$ and $t\gg1$, that is, $x>0$ iff $\lead(x)>0$. This ordering is clearly definable, simply because $x\mapsto\lead(x)$ is a definable map, and because the ordering on $\mathds Z$ is definable.

\section{Defining almost anything}

For any $x\in R$, there is $\sigma\in\Aut(R)$ such that $\sigma(x)\in\mathds Z[t]$. For any such $\sigma$, let $\alpha\in\mathds Q_{>0}$ be such that $\sigma(t)=t^\alpha$. We call such an $\alpha$ a \emph{turnix} of $x$,\footnote{So called because one can turn $x$ into a proper polynomial by evaluating it at $t^\alpha$.} and the minimal turnix of $x$ is called the \emph{radix} of $x$.
The radix of $x$ always exists if $x\notin\mathds Z$; if $x\in\mathds Z$, we say that it is of radix $0$. Note that if $\rad(x)\neq 0$, then turnices of $x$ are exactly the integer multiples of $\rad(x)$.

\begin{lem}
 Let $x\in R$ and let $\alpha$ be a turnix of $x$. Let $\sigma$ be the automorphism sending $t$ to $t^\alpha$, and write $\sigma(x)=f(t)\in\ZZ[t]$. The ``evaluation'' map $E_{x,\alpha}\colon\ZZ\rightarrow \ZZ$, $n\mapsto f(n)$ is definable uniformly with parameters $x$ and $t^{\frac1\alpha}$ in $R$.
\end{lem}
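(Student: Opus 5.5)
We express $f(n)$ through a divisibility condition in $R$. Put $s=t^{\frac1\alpha}$, so $x=f(s)$ with $f\in\ZZ[t]$, because $\sigma^{-1}$ sends $t$ to $s$. Note that $x$ lies in $\ZZ[s]\subseteq R$. Evaluation of a polynomial at an integer is reduction modulo $(s-n)$ in $\ZZ[s]$: indeed $f(s)-f(n)=(s-n)h(s)$ for some $h\in\ZZ[s]$. So the candidate formula is
$$\theta(x,s,n,a)\colon n\in\ZZ\wedge a\in\ZZ\wedge (s-n)\mid (x-a),$$
where divisibility is taken in $R$ and $\ZZ$ is definable by the earlier lemma. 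This formula only mentions the parameters $x$ and $s=t^{\frac1\alpha}$, so it is uniform in them.

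First, $\theta(x,s,n,f(n))$ holds, because $x-f(n)=(s-n)h(s)$ with $h(s)\in\ZZ[s]\subseteq R$.

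For uniqueness, suppose $a,a'\in\ZZ$ both satisfy $\theta$. Then $(s-n)$ divides $a-a'$ in $R$. If $a\neq a'$, then $s-n$ divides a nonzero integer in $R$. Degrees add under multiplication in $R$ (as used in the degree lemma), and $\deg(s-n)=\frac1\alpha>0$. Hence any nonzero multiple of $s-n$ has positive degree, while a nonzero integer has degree $0$. This is a contradiction, so $a=a'$ and $\theta$ defines the graph of $E_{x,\alpha}$.

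There is one possible obstacle. Divisibility in $R$ is weaker than divisibility in $\ZZ[s]$, since cofactors may involve fractional powers $s^{1/k}$. This only matters for uniqueness, and the degree argument above settles uniqueness without ever working inside $\ZZ[s]$. So no extra care is needed beyond checking that $\deg$ is additive on $R\setminus\set0$, which holds because leading coefficients multiply in the domain $\ZZ$.
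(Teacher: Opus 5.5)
Your proof is correct and uses the paper's formula and idea: with $n,c\in\ZZ$, the equation $f(n)=c$ is captured by $(t^{\frac1\alpha}-n)\mid(x-c)$ in $R$. The only difference is that the paper asserts that divisibility by $t-n$ in $\ZZ[t]$ is equivalent to divisibility by $t^{\frac1\alpha}-n$ in $R$, leaving the direction from $R$ down to $\ZZ[t]$ unjustified, whereas your existence-plus-uniqueness argument via additivity of $\deg$ proves that step.
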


\begin{proof}
 We have $E_{x,\alpha}(n)=c$ iff $f(n)=c$ iff $f(t)-c$ has a root at $n$ iff $t-n$ divides $f(t)-c$ in $\ZZ[t]$. Since $\sigma(x)=f(t)$, we have $x=\sigma^{-1}(f(t))=f(t^{\frac1\alpha})$. Thus, $t-n$ divides $f(t)-c$ in $\ZZ[t]$ iff $t^{\frac1\alpha}-n$ divides $x-c$ in $R$, so (the graph of) $E_{x,\alpha}$ is defined by
 $$\phi(n,c;x,t^{\frac1\alpha})\colon (n,c\in\ZZ)\wedge((t^{\frac1\alpha}-n) | (x-c))$$
\end{proof}

\begin{lem}
The set of turnices $\set{(x,t^{\frac1\alpha})\in (R\setminus\ZZ)^2}{\alpha\text{ turnix of }x}$ and the radix map $R\setminus\mathds Z\rightarrow t^{\mathds Q_{>0}}$, $x\mapsto t^{\frac1{\rad(x)}}$ are definable.
\end{lem}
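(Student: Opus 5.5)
We need to characterise "α is a turnix of x" by a first-order condition on the pair $(x,s)$ with $s=t^{1/\alpha}$. Here $s$ ranges over the definable set $t^{\mathds Q_{>0}}$, which is given by the formula of Corollary~2 together with $s\neq 1$.

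\textbf{Reformulation.} Unwinding the definition, $\alpha$ is a turnix of $x$ iff $x\in\ZZ[s]$, that is, iff $\supp(x)\subseteq\frac1\alpha\mathds N$. So the task is to express membership in $\ZZ[s]$ definably. The first thing I would try is a division characterisation, using the elements $s-n$ for $n\in\ZZ$ as in Lemma~5. Since $R=\bigcup_k\ZZ[t^{1/k}]$, I would work in $\ZZ[t^{1/k}]=\ZZ[u]$ for a suitable $k$ with $s=u^q$.

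\textbf{Key claim.} I would aim for the following: $x\in\ZZ[s]$ iff for every $n\in\ZZ$ there is $c\in\ZZ$ with $(s-n)\mid(x-c)$ in $R$.
\begin{enumerate}
\item The forward direction is exactly Lemma~5: take $c=f(n)$.
\item For the converse, write $x=\sum_{j=0}^{q-1}u^j x_j(s)$ with $x_j\in\ZZ[s]$. Divisibility by $s-n$ in $R$ reduces to divisibility in $\ZZ[u]$ with $s=u^q$, since a divisibility in $R$ lives in some $\ZZ[t^{1/k'}]$, and we can refine $k$ accordingly.
\item Modulo $u^q-n$, the ring $\ZZ[u]/(u^q-n)$ is free over $\ZZ$ with basis $1,u,\dots,u^{q-1}$. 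Hence $(s-n)\mid(x-c)$ forces $x_j(n)=0$ for all $j\ge1$.
\item This holds for all $n$, so the polynomials $x_j$ have infinitely many roots, hence $x_j=0$ for $j\geq 1$, and therefore $x\in\ZZ[s]$.
\end{enumerate}
One subtlety must be checked: $k$ can be any multiple, so the decomposition over $u$ of larger index needs care. This is fine, because the support condition is intrinsic and the argument works for any $k$ making $x,s\in\ZZ[u]$.

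\textbf{Definability.} The condition ``$\forall n\in\ZZ\ \exists c\in\ZZ\ (s-n)\mid(x-c)$'' is first order, because $\ZZ$ is definable by Lemma~3 and $t^{\mathds Q_{>0}}$ is definable by Corollary~2. Restricting to $x\notin\ZZ$ gives the set of turnices.

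\textbf{Radix.} Larger turnix $\alpha$ corresponds to smaller exponent $1/\alpha$. The turnices are exactly the integer multiples of $\rad(x)$, so the minimal turnix gives the $s$ of largest degree. Therefore $s=t^{1/\rad(x)}$ is characterised as the turnix-witness $s$ such that every other witness $s'$ for $x$ is a power of $s$. Equivalently, $s'\in\ZZ[s]$, which can be expressed with the same formula applied to the pair $(s',s)$. A simpler alternative is that $s'$ divides $s$, since $s'=s^m$ with $m\ge1$ means $s'$ is a multiple of $s$. So I would define $s=t^{1/\rad(x)}$ as the witness that divides all other witnesses.

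\textbf{Main obstacle.} I expect the main obstacle to be the converse direction of the key claim: showing that divisibility in $R$ reduces cleanly to the quotient $\ZZ[u]/(u^q-n)$ being free with the stated basis, and handling $n=0$ and small $n$ (it suffices to use infinitely many $n$).
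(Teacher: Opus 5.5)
\textbf{Gap in the radix step.} Your characterisation of the set of turnices is correct. The radix step, however, has the divisibility direction reversed, and the formula you finally commit to defines the empty relation.

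The turnices of $x$ are the $m\,\rad(x)$ with $m\geq1$, so the witnesses are $s_m=t^{1/(m\rad(x))}$, and $s_1=s_m^{m}$. The element you want, $t^{1/\rad(x)}$, is therefore a power of every other witness, not the other way round. So ``every other witness $s'$ is a power of $s$'' should read ``$s$ is a power of $s'$'', and ``$s'\in\ZZ[s]$'' should read ``$s\in\ZZ[s']$''.

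Since $t^{a}\mid t^{b}$ iff $a\leq b$, ``the witness that divides all other witnesses'' is the witness of smallest degree. That is $t^{1/\alpha}$ for a largest turnix $\alpha$, and no largest turnix exists. For example, for $x=t^{1/2}$ the witnesses are $t^{1/2},t^{1/4},t^{1/6},\dots$, and none of them divides all the others.

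Your intermediate sentence ``$s'$ divides $s$'' was the right condition, and it matches your correct remark that one wants the $s$ of largest degree. The formula should be $\phi(x,s)\wedge\forall s'\,(\phi(x,s')\rightarrow s'\mid s)$, i.e.\ $s$ is the witness that all witnesses divide; this is the paper's formula.

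\textbf{Comparison for the turnix part.} Here your route differs from the paper's. The paper splits $x$ into only two pieces: $x_\alpha$ (exponents in $\frac1\alpha\mathds N$) and $\overline x_\alpha$. Multiplication by $s-n$ respects this splitting, so $\overline x_\alpha=(s-n)\overline y_\alpha$. Comparing angular components then gives $n\mid\ac(\overline x_\alpha)\neq0$ whenever $n\neq0$, so only finitely many $n$ admit a $c$.

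You instead split into all $q$ residue classes of exponents inside a fixed $\ZZ[u]$, and use that $\ZZ[u]/(u^q-n)$ is $\ZZ$-free on $1,\dots,u^{q-1}$. The paper's argument lives directly in $R$ and needs no choice of $u$. Yours needs divisibility in $R$ to descend to $\ZZ[u]$, which does hold, for either of two reasons:
\begin{itemize}
\item as you indicate, refining $k$ only reindexes the $x_j$ and inserts zeros;
\item more directly, $\ZZ[u^{1/m}]$ is free over $\ZZ[u]$ on $1,u^{1/m},\dots,u^{(m-1)/m}$, so $f=gh$ with $f,g\in\ZZ[u]$ forces $f=gh_0$, where $h_0$ is the component of $h$ along $1$.
\end{itemize}
In return, your argument gives a sharper statement: the admissible $n$ are exactly the common integer roots of the $x_j$ with $j\geq1$.

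Finally, dropping the uniqueness of $c$ that the paper imposes is harmless. Since $s-n$ has positive degree, it cannot divide a nonzero integer, so $c$ is automatically unique.
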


\begin{proof}
 We claim that $\alpha$ is a turnix of $x$ iff for all $n\in\ZZ$, there exists a unique $c\in\ZZ$ such that $t^{\frac1\alpha}-n$ divides $x-c$. Indeed, if $\alpha$ is a turnix of $x$, then by the previous lemma $E_{x,\alpha}$ is definable, and each $n\in\ZZ$ can be sent to a unique $c\in\ZZ$ witnessing the property above, namely, $c=E_{x,\alpha}(n)$. 
 
 Conversely, assume $\alpha$ is not a turnix of $x$. For a given $n$, there could be $c$ such that $t^{\frac1\alpha}-n$ divides $x-c$, for example, $x=t^{\frac32}$, $\alpha=1$, $n=0$, which yields $c=0$. Our claim is that this will only work for finitely many $n$.
 
 Assume that $(n,c)$ is a pair of integers such that $(t^{\frac1\alpha}-n)|(x-c)$, thus, there exists $y\in R$ with $x=(t^{\frac1\alpha}-n)y+c$. Write $x=\sum_{i=0}^d a_i t^{q_i}$, with $d\in\mathds N$, $a_i\in\mathds Z$ and $q_i\in
 \mathds Q_{\geqslant 0}$. We split $x$ in two, according to whether the exponents $q_i$ are multiples of $\frac1\alpha$ or not:
 $$x_{\alpha}=\sum_{\mathclap{q_i\in\frac 1\alpha\mathds N}}a_i t^{q_i},\;\;\;\overline x_{\alpha}=x-x_\alpha.$$ Likewise, we split $y$ into $y_\alpha+\overline y_{\alpha}$. If $\alpha$ is not a turnix of $x$, then $\sigma(x)=\sum_{i=0}^d a_it^{q_i\alpha}$ has a non-integer exponent; that is, some $q_i$ is not a multiple of $\frac1\alpha$ and $\overline x_{\alpha}$ is non-zero.

 Now from $x=(t^{\frac1\alpha}-n)y+c$ we deduce $x_{\alpha}=(t^{\frac1\alpha}-n)y_{\alpha}+c$ and $\overline x_{\alpha}=(t^{\frac1\alpha}-n)\overline y_{\alpha}$. If $n\neq0$, then we have $\ac(\overline x_{\alpha})=-n\ac(\overline y_{\alpha})$, thus, either $n=0$, or $n$ divides $\ac(\overline x_{\alpha})$. For given $x$ and $\alpha$, there can only be finitely many such $n$.

\ 

Therefore the set of turnices can be defined by $$\phi(x,y)\colon x\notin\ZZ\wedge y\in t^{\mathds Q_{>0}}\wedge\forall n\in\ZZ\;\exists! c\in\ZZ\;((y-n)|(x-c))$$
and the radix map by
\[\psi(x,y)\colon\phi(x,y)\wedge\forall z(\phi(x,z)\rightarrow z|y).\qedhere\]
\end{proof}

This very powerful machinery allows us to obtain the coefficients and the exponents of any element in $R$. To do so, first recall that in $\mathds Z$ and therefore in $R$ too, the structure $(\mathds Z^{<\omega},+,\cdot)$ as well as the map $(\mathds N,\mathds Z^{<\omega})\rightarrow\mathds Z$, $(n,(a_i)_i)\mapsto a_n$ (where $a_n=0$ if $n$ is too big), are interpretable via Gödel's $\beta$ function \cite{godel-beta}. We will consider $\mathds Z^{<\omega}$ as an imaginary sort equipped with the structure mentioned above.

Given an element $x\in R\setminus\mathds Z$, there is a unique tuple $(a_i)_{i\leqslant d}\in\mathds Z^{<\omega}$ such that $x=\sum_{i=0}^d a_it^{\frac i{\rad(x)}}$. Combining this with the definability of the radix map, we obtain:
\begin{cor}
 The map $R\setminus\mathds Z\rightarrow\ZZ^{<\omega}$, $\sum_{i=0}^d a_it^{\frac i{\rad(x)}}\mapsto (a_i)_{i\leqslant d}$ is definable.

 The map $\mathds N\times(R\setminus\mathds Z)\rightarrow\ZZ\times t^{\mathds Q_{\geqslant 0}}$, $\sum_{i=0}^d a_it^{\frac i{\rad(x)}}\mapsto a_nt^{\frac n{\rad(x)}}$ is definable.
\end{cor}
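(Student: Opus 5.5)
The plan is to reduce everything to the evaluation maps $E_{x,\alpha}$ from the evaluation lemma, applied with $\alpha=\rad(x)$. By the preceding lemma, the parameter $t^{\frac1{\rad(x)}}$ is itself a definable function of $x$. So for $x\in R\setminus\ZZ$, writing $r=t^{\frac1{\rad(x)}}$, the graph of $n\mapsto f(n)$ is definable uniformly in $x$ alone. Here $f(t)=\sum_{i\leqslant d}a_it^i$ is the polynomial with $x=f(r)$. It then remains to recover the coefficient tuple $(a_i)_{i\leqslant d}$ from finitely many values of $f$, uniformly and inside the interpreted structure $\ZZ^{<\omega}$.

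For this I would characterize the tuple as follows: $(a_i)_{i\leqslant d}$ is the unique $a\in\ZZ^{<\omega}$ such that
\begin{enumerate}
\item its length is $d+1$ with $r^d=t^{\deg(x)}$, using the definable degree map and the definable exponentiation $k\mapsto r^k$ discussed below;
\item for every $n\in\ZZ$, $\sum_i a_in^i=E_{x,\rad(x)}(n)$.
\end{enumerate}
Evaluation of a coded polynomial at $n$ is definable in $(\ZZ,+,\cdot)$ with the $\beta$-function, since it is a recursively defined operation (Horner's scheme) and hence arithmetical. Uniqueness follows because a polynomial is determined by its values on $\ZZ$. Alternatively, I could drop condition (1) and require only that the last entry is nonzero. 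To avoid quantifying over all $n$, one could instead evaluate at a single large integer $N>2\max|a_i|$, for instance $N=2\,\lead(x)$-type bounds, but the universal formulation is simpler and already first order.

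For the second map, the $n$th coefficient $a_n$ is extracted from the tuple via the $\beta$-function, so the remaining task is to define $(n,x)\mapsto t^{\frac n{\rad(x)}}=r^n$. I would define the graph of $k\mapsto r^k$ for $k\in\mathds N$ by a $\beta$-coded sequence $(y_0,\dots,y_k)$ of elements of $R$ with $y_0=1$ and $y_{j+1}=ry_j$. However, $\beta$ codes sequences of integers, not of elements of $R$, and this is the step I expect to be the main obstacle. The way around it is to stay inside powers of $r$: $y=r^k$ iff $\phi(y)$ (being a power of $t$), $y$ is divisible by $r$-powers in the right way, and the radix of $y+r$ equals $r$. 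A cleaner route is the characterization $y=r^k$ iff $y$ is a power of $t$, $r\mid y$ or $y=1$, and $E_{y+x\cdot 0,\ldots}$; concretely, $E_{y,\rad(x)}(2)=2^k$, which pins $k$ down. Indeed, for $y=t^\beta$ with $\beta\in\frac1{\rad(x)}\mathds N$, evaluating via the lemma with parameter $r$ gives $2^{\beta\rad(x)}$. The condition that $\beta$ lies in $\frac1{\rad(x)}\mathds N$ is ``$\rad(x)$ is a turnix of $y$'', which is definable by the previous lemma (with the case $y=1$ separate). Combining $a_n$ with $r^n$ then gives the pair $(a_n,t^{\frac n{\rad(x)}})$.
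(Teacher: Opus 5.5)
Your proposal is correct and follows the route the paper only sketches: compose the definable radix map with the evaluation lemma, and single out the $\beta$-coded tuple with nonzero last entry whose polynomial agrees with $E_{x,\rad(x)}$ on all of $\ZZ$. Your characterization of $y=r^k$ (a power of $t$ that is $1$ or has $\rad(x)$ as a turnix, with $(r-2)\mid(y-2^k)$) correctly supplies the definability of $(n,x)\mapsto t^{\frac n{\rad(x)}}$, which the paper leaves implicit. Delete the garbled intermediate attempts, and also the aside about evaluating at $N\approx 2\lead(x)$: that bound is invalid because $\max_i|a_i|$ is not controlled by $\lead(x)$.
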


Thus, any notion which can be expressed in terms of the coefficients and the exponents of a polynomial is now definable.

\section{Other rings}
The reason why we looked at the ring $\bigcup_n\mathds Z[t^{\frac1n}]$ is that, if you want a discretely ordered ring which is not $\mathds Z$, it must have an infinite positive element $t$, giving you the ring $\mathds Z[t]$. However, the ordering is not definable in this ring, as $t\mapsto -t$ is an automorphism. Note that our first stone still works, that is, $\mathds Z$ is definable through powers of 2, and this ring is undecidable. The secret sauce that makes $R$ a really powerful object is our ability to define powers of $t$ in $R$.

From $\mathds Z[t]$, in order to distinguish $t$ and $-t$, you can simply add a square root of $t$. But now $t^{\frac12}$ and $-t^{\frac12}$ are indistinguishable. So, do it again, and again, and again, until you obtain... $R'=\bigcup_n\mathds Z[t^{\frac1{2^n}}]$. This is not the ring we presented in this paper, as we thought that the polynomial ring with any rational exponent is a more natural object than restricting to only dyadic exponents, but our proof would work alike for the ring $R'$; in fact, it would work alike for any ring lying between $R'$ and $R$, giving infinitely many examples in answer to Robinson.

\bibliographystyle{alpha}
\bibliography{ref.bib}

\end{document}